\UseRawInputEncoding
\documentclass[12pt]{article}

\usepackage{dsfont}
\usepackage{amsfonts,amsmath,amsthm}
\usepackage{algorithm}
\usepackage{algpseudocode}
\usepackage{xcolor}
\usepackage{graphicx}
\usepackage{stmaryrd}        % provides \llbracket and \rrbracket

\usepackage[paper=a4paper,dvips,top=2cm,left=2cm,right=2cm,
foot=1cm,bottom=4cm]{geometry}

\newcommand{\vh}{{\bf h}}

\begin{document}
	\large
	
	\title{Strongly Positive Semi-Definite Tensors and Strongly SOS Tensors}%\footnote{This work was partially supported by Hong Kong Innovation and Technology Commission (InnoHK Project CIMDA).}}
	\author{Liqun Qi\footnote{Department of Applied Mathematics, The Hong Kong Polytechnic University, Hung Hom, Kowloon, Hong Kong.
			%Department of Mathematics, School of Science, Hangzhou Dianzi University, Hangzhou 310018 China
			({\tt maqilq@polyu.edu.hk}).}
		\and { \
			Chunfeng Cui\footnote{LMIB of the Ministry of Education, School of Mathematical Sciences, Beihang University, Beijing 100191 China.
				({\tt chunfengcui@buaa.edu.cn}).}
		}
		%\and {and \
		%Ziyan Luo\footnote{School of Mathematics and Statistics,
		%Beijing Jiaotong University, Beijing 100044, China. ({\tt zyluo@bjtu.edu.cn}).}
		%}
	}
	%This author's work was supported by Beijing Natural Science Foundation (Grant No. Z190002).}
	\date{\today}
	\maketitle
	
	\begin{abstract}
		%An even order symmetric tensor is positive semi-definite (PSD) if and only if it has no {negative} H-eigenvalues.  As an extension, a
	 We  introduce {odd-order} strongly PSD (positive semi-definite) tensors which map real vectors to nonnegative vectors.   We then introduce {odd-order} strongly SOS (sum-of-squares) tensors.   A strongly SOS tensor maps real vectors to nonnegative vectors whose components are all  SOS {polynomials.}   Strongly SOS tensors are strongly PSD tensors.   Odd order completely positive tensors are strongly SOS tensors.  We also introduce strict Hankel tensors, which are also strongly SOS tensors.   Odd order Hilbert tensors are  strict Hankel tensors.   However, the Laplacian tensor of a uniform hypergraph may not be strongly PSD.   This motivates us to study wider PSD-like tensors. A cubic tensor  is said to be a generalized PSD {}  tensor if    its  corresponding   symmetrization tensor  has no  negative   H-eigenvalue.   In the odd order case, this extension contains a peculiar tensor class, whose members have no H-eigenvalues at all.  We call such tensors barren tensors, and the other generalized PSD symmetric tensors genuinely PSD symmetric tensors.    In the odd order case, genuinely PSD tensors embrace various useful structured tensors, such as strongly PSD tensors, symmetric M-tensors and Laplacian tensors of uniform hypergraphs.
			However, there are exceptions.  It is known an even order symmetric B-tensor is a PSD tensor.  We give an example of an odd order symmetric B-tensor, which is a barren tensor.

		\medskip

		% \medskip

		\textbf{Key words.}   Odd order tensors, positive semi-definite, sum of squares, strict Hankel tensors, symmetric M-tensors, completely positive tensors, Hilbert tensors. %B-tensors,
		
		\medskip
		\textbf{AMS subject classifications.} 47J10, 15A18, 47H07, 15A72.
	\end{abstract}

	\renewcommand{\Re}{\mathds{R}}
	\newcommand{\rank}{\mathrm{rank}}
	\newcommand{\X}{\mathcal{X}}
	\newcommand{\A}{\mathcal{A}}
	\newcommand{\I}{\mathcal{I}}
	\newcommand{\B}{\mathcal{B}}
	\newcommand{\C}{\mathcal{C}}
	\newcommand{\D}{\mathcal{D}}
	\newcommand{\LL}{\mathcal{L}}
	\newcommand{\OO}{\mathcal{O}}
	\newcommand{\e}{\mathbf{e}}
	\newcommand{\0}{\mathbf{0}}
	\newcommand{\dd}{\mathbf{d}}
	\newcommand{\ii}{\mathbf{i}}
	\newcommand{\jj}{\mathbf{j}}
	\newcommand{\kk}{\mathbf{k}}
	\newcommand{\va}{\mathbf{a}}
	\newcommand{\vb}{\mathbf{b}}
	\newcommand{\vc}{\mathbf{c}}
	\newcommand{\vq}{\mathbf{q}}
	\newcommand{\vg}{\mathbf{g}}
	\newcommand{\pr}{\vec{r}}
	\newcommand{\pc}{\vec{c}}
	\newcommand{\ps}{\vec{s}}
	\newcommand{\pt}{\vec{t}}
	\newcommand{\pu}{\vec{u}}
	\newcommand{\pv}{\vec{v}}
	\newcommand{\pn}{\vec{n}}
	\newcommand{\pp}{\vec{p}}
	\newcommand{\pq}{\vec{q}}
	\newcommand{\pl}{\vec{l}}
	\newcommand{\vt}{\rm{vec}}
	\newcommand{\vx}{\mathbf{x}}
	\newcommand{\vy}{\mathbf{y}}
	\newcommand{\vu}{\mathbf{u}}
	\newcommand{\vv}{\mathbf{v}}
	\newcommand{\y}{\mathbf{y}}
	\newcommand{\vz}{\mathbf{z}}
	\newcommand{\T}{\top}
	\newcommand{\R}{\mathcal{R}}

	\newtheorem{Thm}{Theorem}[section]
	\newtheorem{Def}[Thm]{Definition}
	\newtheorem{Ass}[Thm]{Assumption}
	\newtheorem{Lem}[Thm]{Lemma}
	\newtheorem{Prop}[Thm]{Proposition}
	\newtheorem{Cor}[Thm]{Corollary}
	\newtheorem{example}[Thm]{Example}
	\newtheorem{remark}[Thm]{Remark}
	
	\section{Introduction}

	Let $m, n$ be positive integers and $m, n \ge 2$.  Denote the set of all $m$th order $n$-dimensional real cubic tensors by $T_{m, n}$.
	Let $\A = \left(a_{i_1\dots i_m}\right) \in T_{m, n}$.
	If $a_{i_1\dots i_m}$ is invariant for any permutation of its indices, then $\A$ is called a symmetric tensor.   Denote the set of all $m$th order $n$-dimensional real symmetric tensors by $S_{m, n}$.   For
	$\A = \left(a_{i_1\dots i_m}\right) \in T_{m, n}$, define a homogeneous polynomial $f : {\mathbb R}^n \to {\mathbb R}$ by
	$$f(\vx) \equiv \A \vx^m = \sum_{i_1,\dots,i_m=1}^n a_{i_1\dots i_m}x_{i_1}\cdots x_{i_m}, \forall \vx \in {\mathbb R}^n.$$
	It is also known that there is a unique symmetric tensor $\B \in S_{m, n}$ such that
	$$f(\vx) \equiv \A \vx^m \equiv \B \vx^m.$$
	The symmetric tensor $\B$ is called the symmetrization of $\A$, and denoted as $\B = {\rm Sym}(\A)$.
	
	If {$\A\vx ^m\ge 0$} for all $\vx \in {\mathbb R}^n$, then $\A$ is called positive semi-definite.
	If {$\A\vx^m > 0$} for all $\vx \in {\mathbb R}^n, \vx \not = \0$, then $\A$ is called positive definite.
	Then this is only meaningful for even-order tensors.   Positive  semi-define tensors and positive definite tensors were introduced for symmetric tensors in \cite{Qi05}, and generalized to non-symmetric tensors in \cite{HHLQ13}.    They have applications in automatic control, spectral hypergraph theory, magnetic resonance imaging and physics.  See Page 10 of \cite{QL17}.
	
	In this paper, we consider positive semi-definiteness of odd-order tensors, by introducing the following tensor class.
		
		\begin{Def}
			Suppose that $m$ is odd.   Let $\A \in  {S_{m, n}}$.   Define $F: {\mathbb R}^n \to {\mathbb R}^n$ as $F(\vx) = \A\vx^{m-1}$ for $\vx \in {\mathbb R}^n$.   We say that $\A$ is {\bf strongly positive semi-definite} if $F(\vx) \ge \0$ for any $\vx \in {\mathbb R}^n$, and $\A$ is {\bf strongly positive definite} if $F(\vx) > \0$ for any
			$\vx \in {\mathbb R}^n$ and $\vx \not = \0$.
		\end{Def}
		In a certain sense, this definition has more geometrical meanings, comparing with generalized positive semi-definiteness and genuinely positive semi-definiteness.
		
		We show that completely positive semi-definite tensors are strongly positive semi-definite tensors, a strongly positive semi-definite tensor may have negative entries,  and the Laplacian tensor of an odd order uniform hypergraph may not be a strongly positive semi-definite tensor.   The set of strongly positive semi-definite tensors of the same order and dimension form a closed convex cone.   We also study its dual cone.
		
		A property stronger than positive semi-definiteness is the SOS (sum-of-squares) property, which is somehow associated with David Hilbert \cite{Hi88}.  Hence, for odd order cubic tensors, we further introduce strongly SOS tensors.  Strongly SOS tensors are strongly positive semi-definite tensors.   We show that completely positive tensors are strongly SOS tensors.  We introduce strict Hankel tensors which are also strongly SOS tensors.  We show that the Hilbert tensors are strict Hankel tensors.
		
		As some important structured tensors, such as the Laplacian tensor of an odd order uniform hypergraph, may not be a strongly positive semi-definite tensors, we thus consider some wider positive semi-definiteness-like properties for odd order tensors.
	
	Let $\A = \left(a_{i_1\dots i_m}\right) \in T_{m, n}$.   For $\vx \in {\mathbb C}^n$, define
	$$\A\vx^{m-1} = \left( \sum_{i_2,\dots, i_m=1}^n a_{ii_2\dots i_m}x_{i_2}\cdots x_{i_m}\right),$$
	and $\vx^{[m-1]} = \left( x_i^{m-1} \right)$.  If there is {a nonzero vector $\vx \in {\mathbb C}^n$} and $\lambda \in {\mathbb C}$ such that
	\begin{equation} \label{eig}
		\A \vx^{m-1} = \lambda \vx^{[m-1]},
	\end{equation}
	then $\lambda$ is called an eigenvalue of $\A$, and $\vx$ is called an eigenvector of $\A$, associated with the eigenvalue $\vx$.   If furthermore $\vx$ is a real vector, then $\lambda$ is called an H-eigenvalue of $\A$ and $\vx$ is called an H-eigenvector of $\A$, associated with the H-eigenvalue $\lambda$.  Then an H-eigenvalue is a real number.   The largest modulus of the eigenvalues of $\A$ is called the spectral radius of $\A$, and denoted as $\rho(\A)$.   Suppose that $\A \in S_{m, n}$ and $m$ is even.  Then $\A$ always has H-eigenvalues, $\A$ is positive semi-definite if and only if all of its H-eigenvalues are nonnegative, and $\A$ is positive definite if and only if all of its H-eigenvalues are positive.
	
	The study of spectral hypergraph theory motivated to extend the concepts of positive semi-definiteness and positive definiteness to odd order tensors.  It is known that Laplacian and signless Laplacian tensors of even order uniform hypergraphs are positive semi-definite.   To extend the concepts of positive semi-definiteness and positive definiteness to Laplacian and signless Laplacian tensors of odd order uniform hypergraphs, in \cite{QL17}, {\bf generalized positive semi-definite and definite tensors} were introduced.
	Let $\A \in T_{m, n}$.   We say $\A$ is a generalized positive semi-definite tensor if Sym$(\A)$ has no negative H-eigenvalues, and say $\A$ is a generalized positive definite tensor if Sym$(\A)$ has no non-positive H-eigenvalues.   It was {presented} in \cite{QL17} that many structured tensors, such as Laplacian and signless Laplacian tensors of uniform hypergraphs, strong Hankel tensors, complete Hankel tensors, and completely positive tensors are all generalized positive semi-definite tensors.
	
	However, the definition of generalized positive semi-definite tensors may involve some non-genuine candidates.  For example, on Page 17 {of \cite{QL17}}, an example of an odd order copositive tensor without H-eigenvalues was described.  In the even order case, a copositive tensor is not a positive semi-definite tensor in general.   However, by this definition, an odd order copositive tensor is a generalized positive semi-definite tensor.   Hence, we should somehow exclude such non-genuine candidates.
	
	\begin{Def}
		Let $\A \in T_{m, n}$.  We say that $\A$ is a {\bf genuinely positive semi-definite tensor} if Sym$(\A)$ has at least one H-eigenvalue and all of its H-eigenvalues are nonnegative.   We say that $\A$ is a {\bf genuinely positive definite tensor} if Sym$(\A)$ has at least one H-eigenvalue and all of its H-eigenvalues are positive.
	\end{Def}

	In the next section, we  introduce barren tensors, which are tensors without H-eigenvalues.  Then
	the class of generalized positive semi-definite tensors is partitioned into two subclasses: the class of
	genuinely positive semi-definite tensors, and the class of barren tensors.  The behaviours of barren tensors are somewhat eccentric.   Also, they are difficult for spectral analysis.   Thus, it is necessary to distinguish barren tensors from genuinely positive semi-definite tensors.   We enumerate a number of structured tensor classes, which are genuinely positive semi-definite tensors.    We also present an example
	of an odd symmetric B-tensor, which is a barren tensor, though an even order symmetric B-tensor is always positive definite.
	
	In Section 3, we show that strongly positive semi-definite tensors are genuinely positive semi-definite tensors, and present their properties.
	
	We introduce strongly SOS tensors and study their properties in Section 4.
	
	Some final remarks are made in Section 5.

	%\section{Cholesky Decomposition of Third Order Hermitian Tensors}
	
	\section{Genuinely Positive Semi-Definite Tensors and Barren Tensors}
	
	We may call a cubic tensor $\A \in T_ {m, n}$ a {\bf barren tensor} if Sym$(\A)$ has no H-eigenvalue at all.  The odd order copositive tensor on Page 17 of \cite{QL17} is such a tensor.  If $\A$ is a barren tensor, then $m$ must be odd.   A generalized positive semi-definite tensor is a genuinely positive semi-definite tensor if it is not a barren tensor.  Furthermore, by definition, a barren tensor is in fact a generalized positive definite tensor.   At this moment, barren tensors are essentially unexplored except the example on Page 17 of \cite{QL17}.
	
	The following proposition {shows} that the behaviours of genuinely positive semi-definite tensors and barren tensors are very different.  In fact, the behaviours of barren tensors are somewhat eccentric.
	
	\begin{Prop}
		If $\A \in T_{m, n}$ is a nonzero genuinely positive definite tensor, then $-\A$ is not a  genuinely positive definite tensor.   On the other hand, if $\A \in T_{m, n}$ is a barren tensor, then $-\A$ is also a barren tensor.
	\end{Prop}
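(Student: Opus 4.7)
The plan is to reduce both claims to two simple observations: that symmetrization is linear, and that negating a tensor negates its H-eigenvalues.

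First I would note that ${\rm Sym}$ is a linear map on $T_{m,n}$ (this is clear from the definition via the identity $\A\vx^m \equiv {\rm Sym}(\A)\vx^m$ and the uniqueness of the symmetric representative), so ${\rm Sym}(-\A) = -{\rm Sym}(\A)$. Next, if $\B \in S_{m,n}$ and $\vx \in \mathbb{R}^n$ is nonzero with $\B \vx^{m-1} = \lambda \vx^{[m-1]}$, then multiplying both sides by $-1$ gives $(-\B)\vx^{m-1} = (-\lambda)\vx^{[m-1]}$; hence $\vx$ is an H-eigenvector of $-\B$ with H-eigenvalue $-\lambda$. Applying the same argument to $-\B$ shows the converse, so the set of H-eigenvalues of $-\B$ is exactly the negation of the set of H-eigenvalues of $\B$.

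With these two observations in hand, both parts are immediate. For the first part, suppose $\A$ is a nonzero genuinely positive definite tensor; then ${\rm Sym}(\A)$ has at least one H-eigenvalue, and every such H-eigenvalue $\lambda$ is strictly positive. By the observation above, $-\lambda$ is an H-eigenvalue of ${\rm Sym}(-\A)$, and $-\lambda < 0$. Hence not all H-eigenvalues of ${\rm Sym}(-\A)$ are positive, so $-\A$ fails to be genuinely positive definite. For the second part, if $\A$ is barren then ${\rm Sym}(\A)$ has no H-eigenvalues at all; were ${\rm Sym}(-\A)$ to possess an H-eigenvalue $\mu$, the same observation would produce the H-eigenvalue $-\mu$ for ${\rm Sym}(\A)$, a contradiction. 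Therefore ${\rm Sym}(-\A)$ also has no H-eigenvalues, i.e. $-\A$ is barren.

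There is really no hard step here; the whole proposition hinges on the fact that ${\rm Sym}$ commutes with scalar multiplication and that the map $\B \mapsto -\B$ sends the spectrum (over $\mathbb{R}$) to its negative. The only point worth flagging is why the hypothesis ``$\A$ is nonzero'' is even included in the first part: it is logically redundant (the zero tensor has H-eigenvalue $0$, so it is not genuinely positive definite to begin with), but it signals that the statement is a genuine asymmetry between $\A$ and $-\A$ rather than a vacuous claim about the zero tensor.
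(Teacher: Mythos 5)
Your proof is correct and is essentially the argument the paper intends — the paper simply states that the conclusions ``follow directly from definitions,'' and you have filled in exactly the two facts that make this work: ${\rm Sym}(-\A)=-{\rm Sym}(\A)$ and the H-spectrum of $-\B$ is the negation of that of $\B$. Your side remark that the ``nonzero'' hypothesis is redundant (since the zero tensor has H-eigenvalue $0$) is also accurate.
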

	\begin{proof}
		The conclusions follow directly from definitions.
	\end{proof}
	
	Also, as barren tensors have no H-eigenvalues, it is hard to make spectral analysis for them.
	{According to {\cite{CPT08, QL17}}, a nonnegative tensor possesses a nonnegative H-eigenvalue. Nevertheless, distinguishing barren tensors and genuinely positive semi-definite tensors is still a challenging task.}
	
	%{We draw the  relations among  genuinely positive semi-definite tensors, barren tensors, and generalized positive semi-definite tensors in Fig.~\ref{fig:generalized}.
	%
	%\begin{figure}
	%	\begin{center}
	%		\includegraphics[width=0.7\linewidth]{relation_genuinely_barren.png}
	%	\end{center}
	%	\caption{The relationship among genuinely positive semi-definite tensors, barren tensors, and generalized positive semi-definite tensors.}
	%	\label{fig:generalized}
	%	\end{figure}}
	
	On the other hand, we may enumerate important structured tensors which are genuinely positive semi-definite.

	\begin{example}
		Symmetric M-tensors.  A cubic tensor $\A \in T_{m, n}$ is called an M-tensor if there exists a nonnegative tensor $\B \in T_{m, n}$ and a positive scalar $s \ge \rho(\B)$ such that $\A = s\I - \B$, where $\I$ is the identity tensor in $T_{m, n}$.  If $s > \rho(\B)$, then $A$ is called a strong M-tensor.  See \cite{QL17}. As a nonnegative tensor always has H-eigenvalues (See Theorem 2.4 of \cite{QL17}), a
		symmetric M-tensor is a genuinely positive semi-definite tensor, and a symmetric strong M-tensor is a genuinely positive definite tensor.  Note that M-tensors have wide applications \cite{DQW13, DW16, ZQZ14}.
	\end{example}
	
	\begin{example}
		Laplacian tensors of uniform hypergraphs.   A uniform hypergraph $G = (V, E)$ has a vertex set $V = \{ 1, 2, \dots, n \}$ and an edge set $E = \{ e_1, \dots, e_p \}$, where each edge $e_i$ is a subset of $V$, with a cardinality $m$.  Such a uniform hypergraph is called an $m$-graph.  The adjacency tensor $\A$ of $G$, $\A = \left(a_{i_1\dots i_m}\right) \in S_{m, n}$ is defined by
		$a_{i_1\dots i_m} = {1 \over (m-1)!}$, if $(i_1, \dots, i_m) \in E$, and $a_{i_1\dots i_m} = 0$ otherwise.
		For a vertex $i \in V$, its degree $d(i)$ is defined by $d(i)= \left| \{ e_k : i \in e_k \in E \} \right|$.
		The degree tensor $\D$ of $G$, is an $m$th order $n$-dimensional diagonal tensor with its $i$th diagonal entry as $d(i)$.   Then the Laplacian tensor {$\LL$} of $G$ is defined as ${\LL} = \D - \A$.  Since a Laplacian tensor is a symmetric M-tensor, it is a genuinely positive semi-definite tensor.  As spectral analysis of Laplacian tensors plays a central role in spectral hypergraph theory \cite{CvB24, CD12, LSF24, LCC17, Ni14, YSS16}, it is important to identify Laplacian tensors are genuinely positive semi-definite tensors.
	\end{example}
	
	\begin{example}
		Nonnegative diagonal tensors.
		{All eigenvalues of a diagonal tensor are the diagonal entries of that diagonal tensor,  with unit vectors as their eigenvectors \cite{QL17}.}
		%The diagonal entries of a diagonal tensor are eigenvalues of that diagonal tensor, with unit vectors as their eigenvectors \cite{QL17}.
		Hence, a nonnegative diagonal tensor is a genuinely positive semi-definite tensor, and a positive diagonal tensor is a genuinely positive definite tensor.  Note the degree tensor of a uniform hypergraph is such a tensor.
	\end{example}
	
	\begin{example}
		Nonnegative generalized positive semi-definite tensors. As a nonnegative tensor always has an H-eigenvalue \cite{QL17}, a nonnegative generalized positive semi-definite tensor is a genuinely positive semi-definite tensor, and a positive generalized positive semi-definite tensor is a genuinely positive semi-definite tensor.
	\end{example}
	
	\begin{example}
		Completely positive tensors.  Let $\vu = (u_1, \dots, u_n)^\top \in {\mathbb R}^n$.   Then we use $\vu^m$ to denote an $m$th order tensor
		$\vu^m = \left(u_{i_1}u_{i_2}\cdots u_{i_m}\right)$.    Suppose that {for a given positive integer  {$r$},} we have nonnegative vectors $\vu^{(1)}, \dots, {\vu^{(r)}}$ such that
		\begin{equation} \label{complete}
			\A = {\sum_{l=1}^r} \left(\vu^{(l)}\right)^m.
		\end{equation}		
		Then $\A$ is an $m$th order completely positive tensor.    Furthermore, if {$r\ge n$ and} $\left\{ \vu^{(1)}, \dots, {\vu^{(r)}} \right\}$ spans ${\mathbb R}^n$, then $\A$ is an $m$th order strongly completely positive tensor.   By definition, completely positive tensors are nonnegative tensors.  By Theorem 6.1 of \cite{QL17}, all the H-eigenvalues of a completely positive tensors are nonnegative.   By Theorem 6.2 of \cite{QL17}, all the H-eigenvalues of a strongly completely positive tensors are positive.
		Thus, completely positive tensors are genuinely positive semi-definite tensors, and strongly completely positive tensors are genuinely positive definite tensors.
	\end{example}
	
	\begin{example}
		Positive Cauchy tensors.  An $m$th order positive Cauchy tensor $\A = \left(a_{i_1\cdots i_m}\right)$ {can be described by}
		\begin{equation} \label{Cauchy}
			a_{i_1\dots i_m} = {1 \over c_{i_1} + \dots + c_{i_m}},
		\end{equation}
		where $\vc = (c_1, \dots, c_n)^\top$ is a positive vector in ${\mathbb R}^n_{++}$, called the generating vector of $\A$.  By Theorem 6.14 of \cite{QL17}, a positive Cauchy tensor is a completely positive tensor, thus a genuinely positive semi-definite tensor.  Furthermore, if all the components of $\vc$ are distinct, then $\A$ is a strongly completely positive tensor, hence a genuinely positive definite tensor.
	\end{example}
	
	\begin{example}
		The Hilbert tensor.   The Hilbert tensor can be regarded as a special positive Cauchy tensor with $c_i = i - {m-1 \over m}$ \cite{QL17}.  Since all the components of $\vc$ are distinct, the Hilbert tensor is a strongly completely positive tensor, hence a genuinely positive definite tensor.
	\end{example}
	
	\begin{example}
		The Pascal tensor.  The $m$th order $n$-dimensional Pascal tensor {$\A = \left(a_{i_1\cdots i_m}\right)$} can be described by
		$${a_{i_1\cdots i_m}} = {(i_1+\dots + i_m -m)! \over (i_1-1)!\dots (i_m-1)!}.$$
		By {Proposition 6.19 of} \cite{QL17}, the Pascal tensor is a completely positive tensor, hence a genuinely positive semi-definite tensor.
	\end{example}
	
	\begin{example}
		The Lehmer tensor.  The $m$th order $n$-dimensional Lehmer tensor $\A = \left(a_{i_1\cdots i_m}\right)$ can be described by
		$$a_{i_1\cdots i_m} = {\min \{i_1, \dots, i_m \} \over \max \{i_1, \dots, i_m \}}.$$
		By {Proposition 6.20 of}  \cite{QL17}, the Lehmer tensor is a completely positive tensor, hence a genuinely positive semi-definite tensor.
	\end{example}
	
	With the above examples, we may think that in general for a symmetric tensor family, if their even order members are positive semi-definite, then their odd order members would be genuinely positive semi-definite.  Then this conclusion is wrong.   We have the following example to show this.
	
	\begin{example}
		An odd order symmetric B-tensor which is a barren tensor.    It was proved in \cite{QS14} that an even order symmetric B-tensor is positive definite.   Also see Section 5.5 of \cite{QL17}.  Let $\B = \left(b_{ijk}\right) \in S_{3, 2}$ be defined by
		$$b_{111} = 10,\ b_{222} = 6,\ b_{112} = b_{121} = b_{211} = - \sqrt{3},\ {\rm and}\ b_{221} = b_{212} = b_{122} = \sqrt{3}.$$
		Then we have
		$$b_{111} + b_{112} + b_{121} + b_{122} = 16-2\sqrt{3} > 0,$$
		$$b_{211} + b_{212} + b_{221} + b_{222} = 6 + \sqrt{3} > 0,$$
		$${1 \over 4}\left(16-2\sqrt{3}\right) > b_{112}, b_{121}, b_{122},$$
		and
		$${1 \over 4}\left(6 + \sqrt{3}\right) > b_{211}, b_{212}, b_{221}.$$
		By {the definition in Section 5.5 of} \cite{QL17}, $\B$ is a symmetric B-tensor.  Assume that $\B$ has an H-eigenvalue $\lambda \in {\mathbb R}$ with an H-eigenvector $\vx \in {\mathbb R}^2$.  Then we have
		$$\left\{ \begin{aligned} 10x_1^2 -2\sqrt{3}x_1x_2 + \sqrt{3}x_2^2 & = & \lambda x_1^2, \\ -\sqrt{3}x_1^2 + 2\sqrt{3}x_1x_2  + 6x_2^2 & = & \lambda x_2^2. \end{aligned} \right.$$
		If $x_2 = 0$, then $x_1 = 0$.  This is impossible.  Thus, we may assume $x_2 \not = 0$.  Let $z = {x_1 \over x_2}$.  Dividing $x_2^2$ on the both sides of the above equations, we have the following equations:
		$$\left\{ \begin{aligned} (10-\lambda)z^2 -2\sqrt{3}z + \sqrt{3} & = & 0, \\ -\sqrt{3}z^2 + 2\sqrt{3}z  + (6-\lambda) & = & 0. \end{aligned} \right.$$
		Since $\lambda$ and $z$ are real, by checking the discriminants of these two quadratic equations, we have
		$$12 - 4\sqrt{3}(10-\lambda) \ge 0,\ 12+ 4\sqrt{3}(6-\lambda) \ge 0. $$
		This leads to a contradiction $6 + \sqrt{3} \ge \lambda \ge 10 - \sqrt{3}$.
		Hence, $\B$ has no H-eigenvalues, and is a barren tensor.
	\end{example}
	
	Is an odd order symmetric B-tensor always a generalized positive definite tensor?   This remains an open question.

	\section{Strongly Positive Semi-Definite Tensors}
	
	We study the properties of strongly positive semi-definite tensors and strongly positive definite tensors in this section.  By definition{, these} tensors are odd order symmetric tensors.
	
	We have the following propositions.
	
	\begin{Prop} \label{cp1}
		An odd order symmetric tensor $\A = \left(a_{i_1\dots i_m}\right)$ is strongly positive semi-definite if and only if $\A$ reduces to an even order positive semi-definite symmetric tensor whenever one of the indices of its entries is fixed.
		An odd order symmetric tensor $\A = \left(a_{i_1\dots i_m}\right)$ is strongly positive definite if and only if $\A$ reduces to an even order positive definite symmetric tensor whenever one of the indices of its entries is fixed.
	\end{Prop}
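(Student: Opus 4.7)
The plan is to unpack the definitions and observe that each component of $F(\vx) = \A\vx^{m-1}$ is precisely the homogeneous polynomial associated with one of the order-$(m-1)$ slice tensors obtained by fixing an index of $\A$. Since $m$ is odd, $m-1$ is even, so positive semi-definiteness is meaningful for the slices.

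First I would write out, for each $i \in \{1,\dots,n\}$,
\[
F_i(\vx) = \sum_{i_2,\dots,i_m=1}^n a_{i\,i_2\dots i_m} x_{i_2}\cdots x_{i_m} = \A^{(i)} \vx^{m-1},
\]
where $\A^{(i)} = (a_{i\,i_2\dots i_m}) \in T_{m-1,n}$ is the slice tensor obtained by fixing the first index of $\A$ to $i$. Because $\A$ is symmetric, the entries of $\A^{(i)}$ are invariant under permutations of $i_2,\dots,i_m$, so $\A^{(i)} \in S_{m-1,n}$. Moreover, by full symmetry of $\A$, fixing \emph{any} single index (rather than the first) to the value $i$ yields the same tensor $\A^{(i)}$, so the statement does not depend on which index is fixed.

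Next I would observe that, by definition, $\A$ is strongly positive semi-definite if and only if $F(\vx) \ge \0$ for every $\vx \in \mathbb{R}^n$, which in turn holds if and only if $F_i(\vx) = \A^{(i)}\vx^{m-1} \ge 0$ for every $i$ and every $\vx \in \mathbb{R}^n$. Since $m-1$ is even and $\A^{(i)}$ is symmetric, the latter inequality for a fixed $i$ is exactly the statement that $\A^{(i)}$ is a positive semi-definite tensor in the usual even-order sense of \cite{Qi05}. This proves the first equivalence in both directions.

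For the second equivalence, I would repeat the same argument with strict inequalities: $\A$ is strongly positive definite iff $F(\vx) > \0$ for all $\vx \ne \0$, iff $\A^{(i)}\vx^{m-1} > 0$ for every $i$ and every $\vx \ne \0$, iff every slice $\A^{(i)}$ is an even-order positive definite symmetric tensor. There is really no hard step here; the only point requiring a brief justification is that fixing different indices produces the same slice (up to the labelling of the remaining indices), and that follows immediately from symmetry of $\A$. I therefore expect the proof to be essentially a one-paragraph unpacking of definitions rather than to contain any substantive obstacle.
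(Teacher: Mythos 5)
Your proposal is correct and follows essentially the same route as the paper's own proof: identifying each component $F_i(\vx)$ with the even-order polynomial of the slice tensor obtained by fixing one index to $i$, and invoking the symmetry of $\A$ to see that it does not matter which index is fixed. Your write-up is in fact slightly more careful than the paper's (you explicitly quantify over all $\vx$ and note that the slices are symmetric), but there is no substantive difference in approach.
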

	\begin{proof}
		Define $\vy = F(\vx) \equiv \A\vx^{m-1}$.  The element $y_i$ is nonnegative (resp. positive) if and only if the $(m-1)$th order tensor  derived by fixing $i_1=i$ is positive semi-definite (resp. positive definite).
		The above results follows directly from the fact that $\A$ is symmetric and by fixing any of the indices, the resulted $(m-1)$th order tensor is  a  positive semi-definite (resp. positive definite) symmetric tensor if {and only if} $\A$ is strongly positive semi-definite (resp. strongly positive definite).
	\end{proof}
	
	\begin{Prop}\label{cp2}
		Suppose that   $\A = \left(a_{i_1\dots i_m}\right)$ is a {symmetric and} strongly positive semi-definite tensor.   Then $a_{i_1\dots i_m}$ is a nonnegative number if $m-1$ of $i_1, \dots, i_m$ are the same.   If furthermore $\A$ is strongly positive definite, then $a_{i_1\dots i_m}$ is a positive number if $m-1$ of $i_1, \dots, i_m$ are the same.
	\end{Prop}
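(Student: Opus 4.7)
The plan is to evaluate the map $F(\vx) = \A\vx^{m-1}$ at the standard basis vectors $\ve_j \in {\mathbb R}^n$. For any $i$, a direct expansion gives
\[
F_i(\ve_j) = \sum_{i_2,\dots,i_m=1}^n a_{i\, i_2\cdots i_m}\,(\ve_j)_{i_2}\cdots (\ve_j)_{i_m} = a_{i\,j\,j\,\cdots j},
\]
where the last index appears $m-1$ times. Since $\A$ is strongly PSD, $F(\ve_j) \ge \0$ componentwise, so $a_{i j j \cdots j} \ge 0$ for all $i,j$. By the symmetry of $\A$, every entry $a_{i_1\cdots i_m}$ in which $m-1$ indices coincide is, after permutation, of the form $a_{i j j \cdots j}$, so the first claim follows.

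For the strongly PD case, the vector $\ve_j$ is nonzero, so the defining condition $F(\vx) > \0$ for $\vx \neq \0$ forces $F_i(\ve_j) = a_{i j j \cdots j} > 0$ for every $i,j$, and symmetry again promotes this to all entries with $m-1$ equal indices.

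An equivalent route would be to invoke Proposition \ref{cp1}: fixing one index of $\A$ produces an $(m-1)$th order symmetric PSD (resp.\ PD) tensor, whose diagonal entries must be nonnegative (resp.\ positive), since evaluating an even-order PSD (resp.\ PD) symmetric tensor at a standard basis vector reproduces its diagonal entry. Either route is essentially a one-line verification, so there is no real obstacle; the only thing to be careful about is matching the combinatorial description ``$m-1$ of $i_1,\dots,i_m$ are the same'' with the evaluation $F_i(\ve_j)$ via the symmetry of $\A$.
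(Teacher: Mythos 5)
Your proof is correct and amounts to the same argument the paper uses: the paper simply cites Proposition \ref{cp1}, and your evaluation of $F$ at the standard basis vectors $\ve_j$ is exactly the computation underlying that citation (you even note the Proposition \ref{cp1} route explicitly as an equivalent alternative). No gaps; the use of symmetry to reduce every entry with $m-1$ equal indices to the form $a_{ijj\cdots j}$ is handled properly.
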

	\begin{proof}
		The above results follow directly from Proposition~\ref{cp1}.
	\end{proof}
	
	Furthermore, we have the following theorem.
	
	\begin{Thm}
		Suppose that $\A$ is a strongly positive semi-definite tensor.   Then $\A$ is a genuinely positive semi-definite tensor.   If furthermore $\A$ is strongly positive definite, then $\A$ is a genuinely positive definite tensor.
	\end{Thm}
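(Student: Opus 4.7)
The plan is to split genuine positive semi-definiteness of $\A$ into its two constituent requirements and handle them separately. Because the definition of strong PSD forces $\A\in S_{m,n}$, we have $\mathrm{Sym}(\A)=\A$, so it suffices to show (i) every H-eigenvalue of $\A$ is nonnegative, and (ii) $\A$ admits at least one H-eigenvalue.

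For (i), I would exploit the parity: with $m$ odd, $m-1$ is even. Given an H-eigenpair $(\lambda,\vx)$ with $\vx\neq\0$, pick any index $i$ with $x_i\neq 0$; then $x_i^{m-1}>0$. The $i$-th coordinate of the eigen-equation reads $(\A\vx^{m-1})_i=\lambda x_i^{m-1}$, while strong PSD forces $(\A\vx^{m-1})_i\geq 0$. Dividing yields $\lambda\geq 0$; in the strong PD case the inequality is strict, giving $\lambda>0$.

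For (ii), I would produce an H-eigenvector by solving the constrained maximization
\begin{equation*}
\max\left\{\,\A\vx^m : \vx\geq\0,\ \textstyle\sum_{i=1}^n x_i^m=1\,\right\}.
\end{equation*}
The feasible set is compact and nonempty, so a maximizer $\vx^*$ exists. The KKT conditions (LICQ holds because the gradient $m(\vx^*)^{[m-1]}$ of the equality constraint is supported on the complement of the active inequality set) produce a scalar $\lambda$ and multipliers $\mu_i\geq 0$ with
\begin{equation*}
(\A(\vx^*)^{m-1})_i = \lambda(x_i^*)^{m-1}-\mu_i/m,\qquad \mu_i x_i^*=0.
\end{equation*}
On coordinates with $x_i^*>0$, complementary slackness gives $\mu_i=0$. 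On coordinates with $x_i^*=0$, the left-hand side is nonnegative by strong PSD, forcing $-\mu_i/m\geq 0$; combined with $\mu_i\geq 0$ this again yields $\mu_i=0$. Hence $\A(\vx^*)^{m-1}=\lambda(\vx^*)^{[m-1]}$ holds coordinatewise, so $\vx^*$ is an H-eigenvector of $\A$, completing (ii).

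The main obstacle is the boundary analysis in (ii): without strong PSD, nothing would stop a strictly positive slack multiplier $\mu_i$ at a zero coordinate of $\vx^*$ from breaking the H-eigen-equation there. Strong PSD is precisely what squeezes $\mu_i$ to zero --- from above by nonnegativity of $F$, from below by dual feasibility. Once (ii) supplies at least one H-eigenvalue, (i) guarantees its nonnegativity, giving genuine PSD. In the strongly PD case the same construction produces an H-eigenvalue, and (i) upgrades nonnegativity to strict positivity across all H-eigenvalues, yielding genuine positive definiteness.
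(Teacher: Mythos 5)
Your proof is correct, and the nonnegativity half (part (i)) is exactly the paper's argument: contract the eigen-equation at a coordinate with $x_i\neq 0$, use that $x_i^{m-1}>0$ since $m-1$ is even, and read off the sign of $\lambda$ from $F(\vx)\ge\0$ (resp.\ $>\0$). Where you genuinely diverge is the existence half (part (ii)). The paper disposes of existence in one line by citing Theorem 3.10 of \cite{Zh11}, which asserts that such a tensor has at least one H-eigenvalue; you instead give a self-contained Perron--Frobenius-style argument, maximizing $\A\vx^m$ over $\{\vx\ge\0,\ \sum_i x_i^m=1\}$ and showing via KKT that strong positive semi-definiteness kills the multipliers of the active constraints $x_i=0$ (they are squeezed between dual feasibility $\mu_i\ge 0$ and $-\mu_i/m=(\A(\vx^*)^{m-1})_i\ge 0$), so the stationarity condition collapses to the H-eigenvalue equation. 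Your LICQ verification is sound, since the gradient $m(\vx^*)^{[m-1]}$ of the equality constraint is supported exactly off the active set. The trade-off: the paper's route is shorter but leans on an external existence theorem, while yours is elementary and additionally produces a \emph{nonnegative} H-eigenvector (strictly positive in the strongly positive definite case, since at a zero coordinate your argument would force $(\A(\vx^*)^{m-1})_i=0$, contradicting strict positivity), which is more information than the statement demands.
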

	\begin{proof}
		By Theorem 3.10 of \cite{Zh11}, a strongly positive semi-definite tensor has at least one H-eigenvalue.
		Suppose that $\lambda$ is an H-eigenvalue of $\A$ with an H-eigenvector $\vx$.  Then for $i \in [n]$, we have
		$$\sum_{i_2, \dots, i_m =1}^n a_{ii_2\dots i_m}x_{i_2}\cdots x_{i_m} = \lambda x_i^{m-1}.$$
		By definition, if $\A$ is strongly positive semi-definite, then $\lambda \ge 0$, and if $\A$ is strongly positive definite, then $\lambda > 0$.  By the definition of genuinely positive semi-definite and definite tensors, we have the conclusion.
	\end{proof}

	%When the 	strongly positive semi-definite tensor is nonnegative, it is   a genuinely positive semi-definite tensor as nonnegative tensor always has an H-eigenvalue \cite{QL17}. Similarly, a  nonnegative	strongly positive semi-definite tensor   is a genuinely positive semi-definite tensor.
	%In the following, we show when $m=3$ and $n=2$, any 	strongly positive semi-definite (resp.  definite) tensor is also a nonnegative (resp. positive) tensor.
	
	%\begin{Prop}
	%Suppose that   $\A = \left(a_{i_1i_2i_3}\right)\in T_{3,2}$ is a symmetric and strongly positive semi-definite (resp.  definite) tensor.  Then it is also  a genuinely positive semi-definite (resp.  definite) tensor.
	%\end{Prop}
	%\begin{proof}
	%When $m=3$ and $n=2$, each entry must possess two indices that are identical. According to Proposition~\ref{cp2}, if $\A$ is strongly positive semi-definite (or definite), all its entries are nonnegative (or positive, respectively). Combining this with the fact that a nonnegative tensor always possesses an H-eigenvalue \cite{QL17}, we derive the desired result.
	%\end{proof}

	We now consider completely positive tensors.

	\begin{Thm} \label{cp}
		A completely positive tensor $\A = \left(a_{i_1\dots i_m}\right) \in S_{m,n}$ is a strongly positive semi-definite tensor.
		
		Furthermore, suppose that $\A = \left(a_{i_1\dots i_m}\right) \in S_{m,n}$ is a completely positive tensor with the form (\ref{complete}). Let $\Gamma_i:=\{l\in [r]: u_i^{(l)}\neq 0\}$ for each $i\in [n]$. If rank$\left([\vu^{(l)}]_{l\in \Gamma_i}\right)=n$ for all $i\in [n]$, then $\A$ is a strongly positive definite tensor.
	\end{Thm}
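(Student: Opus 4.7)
The plan is to expand $\A \vx^{m-1}$ using the completely positive decomposition (\ref{complete}) and exploit that $m$ odd forces $m-1$ to be even. First I would compute, coordinate-wise,
\[
(\A\vx^{m-1})_i \;=\; \sum_{i_2,\dots,i_m=1}^n a_{i i_2\dots i_m} x_{i_2}\cdots x_{i_m} \;=\; \sum_{l=1}^r u_i^{(l)}\left(\sum_{j=1}^n u_j^{(l)} x_j\right)^{\!m-1} \;=\; \sum_{l=1}^r u_i^{(l)} \bigl(\vu^{(l)\top}\vx\bigr)^{m-1},
\]
which is the only nontrivial identity needed. Since each $\vu^{(l)}$ is nonnegative, $u_i^{(l)} \ge 0$; and because $m-1$ is even, $\bigl(\vu^{(l)\top}\vx\bigr)^{m-1} \ge 0$ for every real $\vx$. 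Summing over $l$ gives $(\A\vx^{m-1})_i \ge 0$, establishing that $\A$ is strongly positive semi-definite.

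For the second (strict) part, I would fix $i \in [n]$ and a nonzero $\vx \in \mathbb{R}^n$, and rewrite the sum as $(\A\vx^{m-1})_i = \sum_{l\in \Gamma_i} u_i^{(l)} \bigl(\vu^{(l)\top}\vx\bigr)^{m-1}$, since terms with $l \notin \Gamma_i$ vanish. Each surviving coefficient $u_i^{(l)}$ is strictly positive. Because $m-1$ is even, the sum is a sum of nonnegative terms, and it equals zero only if $\vu^{(l)\top}\vx = 0$ for every $l \in \Gamma_i$, i.e., $\vx$ lies in the kernel of the matrix with rows $\{\vu^{(l)}\}_{l\in \Gamma_i}$. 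The rank-$n$ hypothesis forces this kernel to be trivial, contradicting $\vx \ne \0$. Hence $(\A\vx^{m-1})_i > 0$ for all $i$ and all nonzero $\vx$, so $\A$ is strongly positive definite.

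There is no real obstacle in this argument: once the key expansion is written down, both conclusions reduce to the parity of $m-1$ together with a transparent rank/kernel argument. The only point that requires a little care is verifying that the expansion $(\vu^{(l)\top}\vx)^{m-1}$ appears with a single leading factor $u_i^{(l)}$ rather than with symmetrized combinatorial weights; this follows because the rank-one summand $(\vu^{(l)})^m$ already has the fully symmetric form $u_{i_1}^{(l)}\cdots u_{i_m}^{(l)}$, so no symmetrization constants arise.
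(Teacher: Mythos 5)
Your proof is correct and follows essentially the same route as the paper: the paper writes the $i$th slice as $A(i)=\sum_{l} u_i^{(l)}\left(\vu^{(l)}\right)^{m-1}$ and invokes its positive (semi-)definiteness as an even-order tensor, which is exactly your identity $(\A\vx^{m-1})_i=\sum_{l} u_i^{(l)}\bigl((\vu^{(l)})^{\top}\vx\bigr)^{m-1}$ combined with the parity of $m-1$ and the same rank/kernel argument for the definite case.
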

	\begin{proof}   Suppose that $\A = \left(a_{i_1\dots i_m}\right)$ is a completely positive tensor with the form (\ref{complete}).
		%	Fix the first index $i$ for $\A = (a_{ijk})$. Then we have a square matrix
		%	$$A(i) = \sum_{l=1}^r  u_i^{(l)} \left(\vu^{(l)}\right)^2.$$
		%	By matrix analysis, this is a positive semi-definite symmetric matrix.
		%	
		%	Furthermore, since $\A$ has the rank-one decomposition (\ref{complete}),
		For any $i\in [n]$, the $i$th slice matrix of $\A$ in any mode (by the symmetry of the tensor) takes the form of
		$$A(i) = \sum_{l=1}^r  u_i^{(l)}{\left(\vu^{(l)}\right)^{(m-1)}.}$$
		The positive semi-definiteness of each $A(i)$ follows directly by the nonnegativity of all $\vu^{(l)}$'s. Thus, $\A$ is a strongly positive semi-definite tensor by definition.
		
		Furthermore, one has
		$$A(i) = \sum_{l=1}^r  u_i^{(l)} {\left(\vu^{(l)}\right)^{(m-1)}} = \sum_{l\in \Gamma_i} u_i^{(l)}{\left(\vu^{(l)}\right)^{(m-1)}}.$$ Note that $u_i^{(l)}>0$ for each $l\in \Gamma_i$, and rank$\left([\vu^{(l)}]_{l\in \Gamma_i}\right)=n$. Thus $A(i)$ is positive definite for all $i\in [n]$. The desired result follows readily from the definition of strongly positive definite tensors.
	\end{proof}
	
	\begin{Cor}\label{cp3} If $\A$ is a strongly completely positive tensor with the form (\ref{complete}), and $\vu^{(1)}, \dots, \vu^{(r)}$ are all positive {vectors}, then $\A$ is a strongly positive definite tensor.
	\end{Cor}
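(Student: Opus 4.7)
The plan is to deduce this corollary as a direct specialization of the second (strongly positive definite) assertion of Theorem~\ref{cp}. That theorem requires only one nontrivial condition on the decomposition (\ref{complete}): for each index $i \in [n]$, the collection $[\vu^{(l)}]_{l \in \Gamma_i}$, where $\Gamma_i = \{l \in [r]: u_i^{(l)} \neq 0\}$, should have rank $n$. So the entire task is to verify this rank condition under the hypotheses of the corollary, then quote Theorem~\ref{cp}.

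First I would use the positivity hypothesis on each $\vu^{(l)}$ to make the verification immediate: since $u_i^{(l)} > 0$ for every $i \in [n]$ and every $l \in [r]$, we have $\Gamma_i = [r]$ for all $i$. Hence the matrix $[\vu^{(l)}]_{l \in \Gamma_i}$ coincides with $[\vu^{(1)}, \dots, \vu^{(r)}]$ for every $i$, independently of the slice. Next I would invoke the definition of a strongly completely positive tensor (recalled in the Completely Positive Tensors example of Section~2), which asserts $r \ge n$ and that $\{\vu^{(1)}, \dots, \vu^{(r)}\}$ spans $\mathbb{R}^n$, so this matrix has rank $n$. Thus the rank hypothesis of Theorem~\ref{cp} is satisfied uniformly in $i$, and strong positive definiteness of $\A$ follows.

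There is essentially no obstacle here: the only point to confirm is that the spanning hypothesis built into \textbf{strongly completely positive} lines up exactly with the slice-rank hypothesis required by Theorem~\ref{cp}, which it does precisely because componentwise positivity forces each $\Gamma_i$ to coincide with the full index set $[r]$. In particular, no new spectral computation is needed beyond what has already been established in Theorem~\ref{cp}.
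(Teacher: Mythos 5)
Your proposal is correct and follows exactly the paper's own argument: positivity of every $\vu^{(l)}$ forces $\Gamma_i=[r]$ for each $i$, the spanning condition in the definition of strongly completely positive tensors gives rank $n$, and the conclusion then follows from the strongly positive definite part of Theorem~\ref{cp}. No differences worth noting.
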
		
	\begin{proof}
		Obviously, for a strongly completely positive tensor $\A$ with all $\vu^{(l)}$'s positive, the index set $\Gamma_i$ in Proposition \ref{cp3} turns out to be $[r]$ for each $i$, and rank$\left([\vu^{(l)}]_{l\in [r]}\right)=n$ holds since all $\vu^{(l)}$'s  span the entire space $\mathbb R^n$ by the definition of strongly completely positive tensors.
	\end{proof}
	
	\begin{Cor}
		Suppose that $\A$ is a positive Cauchy tensor described by (\ref{Cauchy}), and all the components of $\vc$ are distinct.   Then $\A$ is a strongly positive definite tensor.
	\end{Cor}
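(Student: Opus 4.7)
The plan is to reduce to a slice-wise positive-definiteness statement via Proposition~\ref{cp1} and then verify each slice using an integral representation of the Cauchy kernel. By Proposition~\ref{cp1}, showing that $\A$ is strongly positive definite amounts to showing that for every fixed $i \in [n]$ the even-order symmetric tensor $A(i)$ obtained by fixing the first index at $i$ is a positive definite tensor of order $m-1$, i.e.\ $A(i)\vx^{m-1} > 0$ for every $\vx \neq \0$ in $\mathbb R^n$.

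The key computation I would carry out is the following. Since each $c_j > 0$, for any $\vx \in \mathbb R^n$,
\begin{equation*}
A(i)\vx^{m-1} \;=\; \sum_{i_2,\dots,i_m=1}^n \frac{x_{i_2}\cdots x_{i_m}}{c_i + c_{i_2}+\cdots+c_{i_m}} \;=\; \int_0^1 t^{c_i-1}\left(\sum_{j=1}^n x_j\, t^{c_j}\right)^{m-1} dt,
\end{equation*}
where I have applied the identity $1/s = \int_0^1 t^{s-1}\,dt$ to $s = c_i + c_{i_2}+\cdots+c_{i_m}$ and then swapped the sum and the integral. Because $m$ is odd, the exponent $m-1$ is even, so the integrand is pointwise nonnegative. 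Assume now that $\vx \neq \0$. Since the exponents $c_1, \dots, c_n$ are distinct and positive, the functions $t\mapsto t^{c_1},\dots,t\mapsto t^{c_n}$ are linearly independent on $(0,1)$, so the generalized polynomial $g(t) = \sum_j x_j t^{c_j}$ is not identically zero on $(0,1)$; being continuous, it is nonzero on an open subinterval, on which $t^{c_i-1}\, g(t)^{m-1} > 0$. Hence the integral is strictly positive, which proves that $A(i)$ is positive definite. Applying Proposition~\ref{cp1} finishes the proof.

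The only nontrivial step is the linear independence of $\{t^{c_j}\}_{j=1}^n$ on $(0,1)$ for distinct positive $c_j$'s; this is a standard generalized Vandermonde fact (and in fact a special case of the Müntz theorem), so it should be handled by a one-line citation or a short Wronskian argument. As a sanity check, essentially the same integral representation simultaneously exhibits a completely positive integral decomposition $\A = \int_0^1 t^{-1}\vu(t)^m\,dt$ with $\vu(t) = (t^{c_1/m},\dots,t^{c_n/m}) > \0$, so after a suitable discretization one could instead invoke Corollary~\ref{cp3} directly; but the slice-wise route above is the most economical.
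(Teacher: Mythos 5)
Your proof is correct, but it takes a more self-contained route than the paper does. The paper states this corollary without proof, as an immediate consequence of Corollary~\ref{cp3}: by Theorem 6.14 of \cite{QL17}, a positive Cauchy tensor with distinct generating components is a strongly completely positive tensor whose decomposition (\ref{complete}) uses positive vectors spanning ${\mathbb R}^n$, and Corollary~\ref{cp3} then yields strong positive definiteness. You instead bypass the completely positive machinery entirely: you reduce to slice-wise positive definiteness via Proposition~\ref{cp1} and verify each slice directly from the integral representation $A(i)\vx^{m-1}=\int_0^1 t^{c_i-1}\bigl(\sum_j x_j t^{c_j}\bigr)^{m-1}\,dt$, with strict positivity coming from the linear independence of $\{t^{c_j}\}$ for distinct positive exponents. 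This is the same underlying computation that proves Theorem 6.14 of \cite{QL17} in the first place, so in spirit the two arguments coincide; what your version buys is that it avoids the discretization (or limiting) step needed to turn the integral into a finite decomposition of the form (\ref{complete}) before Corollary~\ref{cp3} can be invoked, and it makes transparent exactly where the distinctness hypothesis enters (if two $c_j$'s coincide, a nonzero $\vx$ annihilates the integrand, so the hypothesis is sharp). Two minor points: the integrability of $t^{c_i-1}$ near $0$ holds because $c_i>0$, which you implicitly use and could state; and in your closing aside the substitution should be $\vu(t)=(t^{c_1},\dots,t^{c_n})^\top$ with weight $t^{-1}$ (or $t^{c_j/m}$ with an extra factor of $m$), a harmless normalization slip that does not affect the main argument.
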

	
	\begin{Cor}
		The Hilbert tensor is a strongly positive definite tensor.
	\end{Cor}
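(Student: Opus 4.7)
The plan is to reduce this corollary to the previous one (positive Cauchy tensors with distinct generating components are strongly positive definite) via the representation of the Hilbert tensor as a positive Cauchy tensor, which was already recorded in Example 2.7 of the excerpt.

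First I would recall from Example 2.7 that the Hilbert tensor is exactly the positive Cauchy tensor whose generating vector has components $c_i = i - \frac{m-1}{m}$ for $i = 1, \dots, n$. In particular, this vector lies in $\mathbb{R}^n_{++}$ since $c_i \ge 1 - \frac{m-1}{m} = \frac{1}{m} > 0$, so the Cauchy representation (\ref{Cauchy}) is indeed well-defined and produces a positive Cauchy tensor.

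Next I would check the distinctness hypothesis needed to invoke the previous corollary: for $i \ne j$ we have $c_i - c_j = i - j \ne 0$, so the components of $\vc$ are pairwise distinct integers-shifted-by-a-constant and hence distinct. At this point all hypotheses of the immediately preceding corollary (that a positive Cauchy tensor with distinct generating components is strongly positive definite) are verified, and the conclusion follows at once by invoking it.

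There is essentially no obstacle: the nontrivial work has already been done in the preceding results (Theorem~\ref{cp} and its corollary on positive Cauchy tensors, themselves descendants of Theorem 6.14 of \cite{QL17}). The proof is therefore a two-line citation — identify the Hilbert tensor as a positive Cauchy tensor with distinct generating components, then apply the previous corollary.
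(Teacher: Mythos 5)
Your proposal is correct and matches the paper's (implicit) argument exactly: the corollary is stated without proof precisely because it follows from identifying the Hilbert tensor as the positive Cauchy tensor with $c_i = i - \frac{m-1}{m}$, whose components are positive and pairwise distinct, and then applying the preceding corollary on positive Cauchy tensors. Your verification of positivity and distinctness of the $c_i$ is the only content needed, and it is right.
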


	We draw the  relations among   generalized positive semi-definite tensors, genuinely positive semi-definite tensors, barren tensors, strongly positive semi-definite tensors, and completely positive tensors for both even and odd order cases in Fig.~\ref{fig:generalized}.
		
		\begin{figure}
			\begin{center}
				\includegraphics[width=\linewidth]{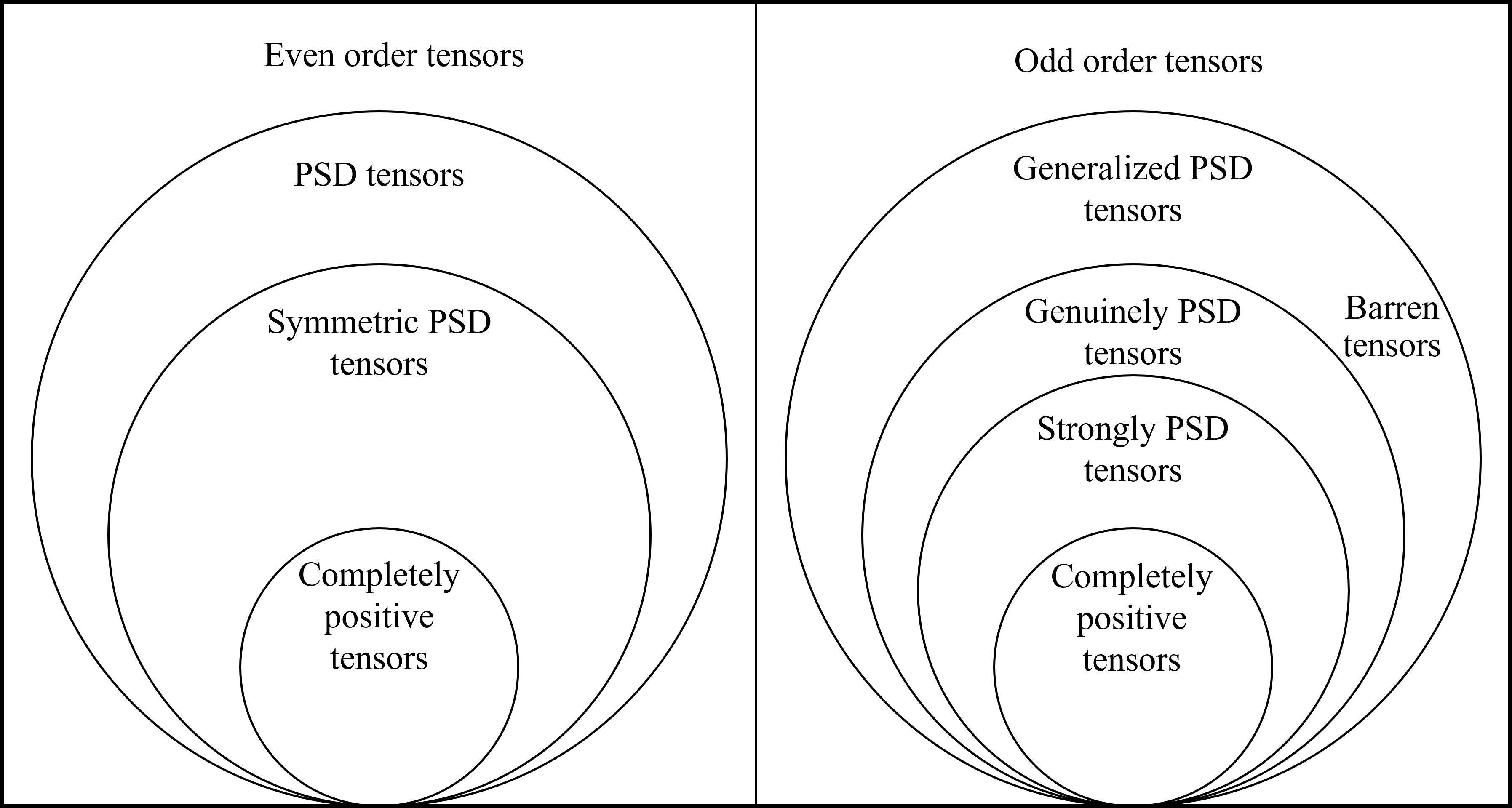}
			\end{center}
			\caption{The relationship among generalized positive semi-definite tensors, genuinely positive semi-definite tensors, barren tensors, strongly positive semi-definite tensors, and completely positive tensors for both even and odd order {cases. Here, PSD is the abbreviation for positive semi-definite.}}
			\label{fig:generalized}
		\end{figure}

	So far, all the examples of strongly positive semi-definite tensors are nonnegative tensors.  The following example shows that there are strongly positive semi-definite tensors which are not nonnegative tensors.   We also show that this example is genuinely positive semi-definite, i.e., it is not a barren tensor.

	\begin{example}
		Let the $\A=(a_{i_1i_2i_3})$ be defined as follows,
		\begin{equation}
			a_{i_1i_2i_3} = \left\{
			\begin{array}{ll}
				c_1, & \text{if } i_1=i_2=i_3, \\
				c_2,& \text{{else if }} i_1=i_2 \text{ or }  i_2=i_3  \text{ or }  i_1=i_3, \\
				-1, & \text{otherwise},
			\end{array}
			\right.
		\end{equation}
		where $c_1,c_2\ge 0$, {and $c_1-3c_2-3\ge 0$}.
		Then we have
		\begin{eqnarray*}
			\sum_{i_2,i_3=1}^n a_{i_1i_2i_3} x_2x_3 &=&  (2c_2+2){x_{i_1}} \left(\sum_{i=1}^n x_i \right) + (c_2+1)\left(\sum_{i=1}^n x_i^2\right)  \\
			&&-    \left(\sum_{i=1}^n x_i \right)^2  +(c_1-3c_2-3) {x_{i_1}^2}.
		\end{eqnarray*}
		{Without loss of generality,} assume that $\sum_{i=1}^n x_i^2 =1$ and $\sum_{i=1}^n x_i = \alpha$.
		Then we have $-\sqrt{n}\le \alpha \le \sqrt{n}$, $-1\le x_i\le 1$, and
		\begin{eqnarray*}
			\sum_{i_2,i_3=1}^n a_{i_1i_2i_3} x_2x_3 &=&  (2c_2+2){x_{i_1}}\alpha +(c_1-3c_2-3) {x_{i_1}^2}-\alpha^2 +c_2+1\\
			&\ge&  -\frac{(c_2+1)^2}{c_1-3c_2-3}\alpha^2 -\alpha^2 +c_2+1\\
			&\ge& c_2+1-n-\frac{(c_2+1)^2}{c_1-3c_2-3} n.
		\end{eqnarray*}
		Therefore, when $c_2+1\ge 2n$, $c_1\ge 3c_2+3+(c_2+1)^2\ge 6n+4n^2$, $\sum_{i_2,i_3=1}^n a_{i_1i_2i_3} x_2x_3$ will always be nonnegative. Thus, $\A$ is  strongly positive semi-definite.
		
		%Furthermore, by taking the summation over \red{$i_1$ on both hand sides of the equation} $ \sum_{i_2,i_3=1}^n a_{i_1i_2i_3} x_2x_3 = \lambda x_i^2$,  we have
		%\begin{equation*}
		%(2c_2+2)\alpha^2 -n\alpha^2+  (c_2+1)n + c_1-3c_2-3 = \lambda.
		%\end{equation*}
		%Therefore, it follows from $\alpha^2 \le n$ that
		%\begin{equation*}
		%\lambda \ge \left\{
		%\begin{array}{ll}
		%(c_2+1)n + c_1-3c_2-3, & \text{if } D,\\
		%c_1 +3(c_2+1)n - n^2-3c_2-3 , & \text{if } n> 2c_2+2.
		%\end{array}
		%\right.
		%\end{equation*}
		%\red{Therefore,   by setting  $c_1\ge  (3-n)(c_2+1)$  if $n\le 2(c_2+1)$, and $c_1\ge n^2-3(c_2+1)n  +3(c_2+1)$  if $n> 2(c_2+1)$, respectively,  $\A$ shall be a generalized positive definite tensor.}
	\end{example}
	
	Denote the set of $m$th order $n$-dimensional strongly positive semi-definite tensors by SPSD$_{m, n}$, and the set of $m$th order $n$-dimensional strongly positive definite tensors by SPD$_{m, n}$.  Then we have the following theorem.
	
	\begin{Thm}
		The set SPSD$_{m, n}$ is a closed convex cone.  Suppose that $\A, \B \in$ SPSD$_{3, n}$. Then the Hadamard product $\C$ of $\A$ and $\B$ is still in SPSD$_{3, n}$.   If $\A, \B \in$ SPD$_{3, n}$, then $\C  \in$ SPD$_{3, n}$.
	\end{Thm}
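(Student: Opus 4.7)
The plan is to split the theorem into three parts and handle each with standard tools: (i) convex cone structure, (ii) closedness, and (iii) the Hadamard-product claims in the third-order case, which I want to reduce to the classical Schur product theorem via the slice-matrix characterization already established in Proposition~\ref{cp1}.

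For the cone structure, the key observation is that for any fixed $\vx \in \mathbb{R}^n$ the map $\A \mapsto \A\vx^{m-1}$ is linear. Hence if $\A, \B \in \mathrm{SPSD}_{m,n}$ and $s, t \geq 0$, then $(s\A + t\B)\vx^{m-1} = s\,\A\vx^{m-1} + t\,\B\vx^{m-1} \geq \0$ componentwise, proving convexity and closure under non-negative scaling. For topological closedness, I would take a convergent sequence $\A^{(k)} \to \A$ in $\mathrm{SPSD}_{m,n}$ and fix an arbitrary $\vx$; since $\A \mapsto \A\vx^{m-1}$ is continuous (in fact a polynomial in the entries), we get $\A^{(k)}\vx^{m-1} \to \A\vx^{m-1}$, and the limit of a sequence of componentwise-nonnegative vectors is componentwise nonnegative.

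For the Hadamard product part (with $m=3$), I would appeal to Proposition~\ref{cp1}, which says that $\A \in \mathrm{SPSD}_{3,n}$ if and only if every slice matrix $A(i)$ (obtained by fixing one index to $i$) is PSD, and likewise $\A \in \mathrm{SPD}_{3,n}$ iff every $A(i)$ is PD. Writing $c_{ijk} = a_{ijk} b_{ijk}$ for the Hadamard product, the $i$th slice of $\C$ satisfies $C(i) = A(i) \circ B(i)$ (matrix Hadamard product). The Schur product theorem then finishes it: if $A(i)$ and $B(i)$ are PSD then so is $A(i)\circ B(i)$, giving $\C \in \mathrm{SPSD}_{3,n}$. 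For the definite case, both $A(i)$ and $B(i)$ are PD, so in particular all their diagonal entries are strictly positive, and the strengthened Schur product theorem yields that $A(i)\circ B(i)$ is PD for every $i$, hence $\C \in \mathrm{SPD}_{3,n}$.

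I do not expect a real obstacle here: the only subtlety is invoking the strengthened Schur product theorem correctly in the PD case, where one needs the diagonal entries of one factor to be positive, a condition automatically supplied by positive definiteness. The whole argument is essentially a translation of the matrix Schur product theorem into the tensor language via the slice-matrix characterization, combined with elementary continuity/linearity for the cone structure. The proof only works for $m=3$ because the slice obtained by fixing one index of a third-order tensor is a matrix, for which the Schur product theorem applies directly; for higher odd $m$ the slices become tensors of order $m-1 \geq 4$ and no analogous elementary statement is available, which is presumably why the authors restrict to $m=3$.
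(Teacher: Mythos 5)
Your proposal is correct and follows essentially the same route as the paper: the authors also obtain the cone structure directly from the definition (linearity and continuity of $\A \mapsto \A\vx^{m-1}$) and derive the Hadamard-product claims from Proposition~\ref{cp1} together with the Schur product theorem. Your write-up simply supplies the details (including the correct use of the strengthened Schur product theorem for the positive definite case) that the paper leaves implicit.
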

	\begin{proof}  The first conclusion follows {from the} definition.  The other two conclusions follow from Proposition \ref{cp1} and the Schur product theorem in matrix analysis \cite{HJ13}.
	\end{proof}
	
	{When $m\ge 5$, the Hadamard product $\C$ of $\A,\B\in$SPSD$_{m, n}$  may not be in SPSD$_{m, n}$. See Example 5.7 in \cite{QL17} for an example that the Hadamard product of two symmetric positive semi-definite tensors of order $m=4$  fail to be positive semi-definite.}
	
	{Furthermore,} we may study the dual cone of SPSD$_{m, n}$.
	Before proceeding, let us recall the dual  {cone} of the positive semi-definite tensors, referred to as PSD$_{m, n}$ for an even number $m$. By Proposition 5.3 in  \cite{QL17}, PSD$_{m, n}$ is   a closed convex pointed cone, and its dual cone,
	termed $V_{m,n}$, has the form
	\begin{equation}
		V_{m,n}=\left\{\sum_{i=1}^l \left(x^{(i)}\right)^m: x^{(i)}\in\mathbb R^n, i=1,\dots,l\right\},
	\end{equation}
	where $l$ is the dimension of the space $S_{m,n}$. Furthermore,  {$V_{m,n}\subseteq$PSD$_{m,n}$ and the equality holds if and only if $m=2$.}
	Let $m$ be an odd number. Define
	\begin{equation}
		SV_{m,n}=\{\A=(a_{i_1,\dots,i_m}):  \A_i=(a_{ii_2,\dots,i_m})  \in V_{m-1,n} \text{ for all } i=1,\dots,n\}.
	\end{equation}
	Then we have the following results.
	\begin{Prop}\label{prop:dual_cone}
		For any odd number $m\ge 3$, we have $SV_{m,n}$ is in the dual cone of SPSD$_{m, n}$. Furthermore,
		{we have $SV_{m,n}=$SPSD$_{m,n}$ and $SV_{3,n}=$SPSD$_{3,n}$.}
		%$SV_{3,n}$ is equal to SPSD$_{3, n}$.
		%, which is in the dual cone of SPSD$_{3, n}$.
	\end{Prop}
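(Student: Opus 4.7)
The plan is to establish three assertions in sequence: (i) $SV_{m,n}\subseteq(\mathrm{SPSD}_{m,n})^{\ast}$; (ii) equality of the dual cone of $\mathrm{SPSD}_{m,n}$ with $SV_{m,n}$; and (iii) the equality $SV_{3,n}=\mathrm{SPSD}_{3,n}$, which together with (ii) yields self-duality of $\mathrm{SPSD}_{3,n}$.

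For (i), I would exploit the fact that the tensor Frobenius inner product splits along the first index as
$$\langle\X,\A\rangle \;=\; \sum_{i=1}^n\langle\X_i,\A_i\rangle,$$
with $\X_i=(x_{ii_2\dots i_m})$ and $\A_i=(a_{ii_2\dots i_m})$ the $(m-1)$-order slices. For $\A\in\mathrm{SPSD}_{m,n}$, Proposition~\ref{cp1} puts each $\A_i$ in $\mathrm{PSD}_{m-1,n}$; for $\X\in SV_{m,n}$, each $\X_i$ lies in $V_{m-1,n}$ by definition. Since $m-1$ is even, Proposition~5.3 of \cite{QL17} recalled above identifies $V_{m-1,n}$ as the dual cone of $\mathrm{PSD}_{m-1,n}$, so every pairing $\langle\X_i,\A_i\rangle$ is nonnegative and so is the sum. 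The same reference also yields (iii) immediately: for $m-1=2$ the inclusion $V_{2,n}\subseteq\mathrm{PSD}_{2,n}$ is an equality, since every PSD matrix admits a sum of rank-one symmetric outer products; hence the slice-wise conditions defining $SV_{3,n}$ and $\mathrm{SPSD}_{3,n}$ coincide.

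For (ii), the hard direction is the reverse containment, which I would handle by contrapositive. If $\X\notin SV_{m,n}$ then some slice $\X_{i_0}\notin V_{m-1,n}$, and by $V$--$\mathrm{PSD}$ duality there exists $\B\in\mathrm{PSD}_{m-1,n}$ with $\langle\X_{i_0},\B\rangle<0$. I would lift $\B$ to a symmetric tensor $\A\in\mathrm{SPSD}_{m,n}$ of the form $\A=\mathrm{Sym}(\e_{i_0}\otimes\B)+\mu\mathcal{D}$, where $\mathcal{D}$ is a fixed strongly positive definite background tensor (e.g., a positive diagonal tensor, or a strongly completely positive tensor supplied by Corollary~\ref{cp3}) scaled large enough to enforce PSD at every slice of $\A$, while $\mu$ is chosen small enough that the negative contribution from slice $i_0$ survives in $\langle\X,\A\rangle$. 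The main obstacle is precisely this lifting: symmetrizing $\e_{i_0}\otimes\B$ spills contributions into every slice, so one must quantitatively balance the background against the induced cross-slice perturbations while preserving the targeted sign of $\langle\X,\A\rangle$. This balancing, whose feasibility is ultimately guaranteed by the fact that $\mathrm{SPSD}_{m,n}$ has nonempty interior (it contains strongly positive definite tensors), is the technical heart of the reverse inclusion.
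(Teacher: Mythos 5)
Your parts (i) and (iii) are correct, and they actually supply the details that the paper's one-line proof (``the conclusions follow from the definition'') omits: the Frobenius inner product splits over the first-index slices, Proposition~\ref{cp1} places every slice of a tensor in $\mathrm{SPSD}_{m,n}$ inside $\mathrm{PSD}_{m-1,n}$, every slice of an element of $SV_{m,n}$ lies in $V_{m-1,n}$ by construction, and the duality of $V_{m-1,n}$ and $\mathrm{PSD}_{m-1,n}$ for the even order $m-1$ gives nonnegativity of each slice pairing, while $V_{2,n}=\mathrm{PSD}_{2,n}$ gives $SV_{3,n}=\mathrm{SPSD}_{3,n}$. One caveat on the ``Furthermore'' clause: as printed, $SV_{m,n}=\mathrm{SPSD}_{m,n}$ can only be intended as the inclusion $SV_{m,n}\subseteq\mathrm{SPSD}_{m,n}$ (equality fails for $m\ge 5$ precisely because $V_{m-1,n}\subsetneq\mathrm{PSD}_{m-1,n}$ then). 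That clause is a \emph{primal} statement about the cones themselves, which follows from the same slice argument via $V_{m-1,n}\subseteq\mathrm{PSD}_{m-1,n}$ and Proposition~\ref{cp1}; you should state it in that form rather than replacing it by an assertion about the dual cone.

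The genuine gap is your part (ii). The proposition does not claim that the dual cone of $\mathrm{SPSD}_{m,n}$ \emph{equals} $SV_{m,n}$; that equality, together with the self-duality of $\mathrm{SPSD}_{3,n}$, is explicitly listed as Open Question 3 in the Final Remarks, so you are attempting an open problem in place of the stated result, and your sketch does not close it. In the construction $\A=\mathrm{Sym}(\e_{i_0}\otimes\B)+\mu\mathcal{D}$ the parameter $\mu$ is asked to do two opposing jobs: it must be large enough to restore positive semi-definiteness of every slice after symmetrization smears $\B$ across all of them, and small enough that the negative pairing $\langle\X_{i_0},\B\rangle<0$ is not swamped by $\mu\langle\X,\mathcal{D}\rangle$, which can be an arbitrarily large positive number for the given $\X$. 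Nothing in the argument shows these two windows intersect, and the nonempty interior of $\mathrm{SPSD}_{m,n}$ only yields pointedness of the dual cone, not the existence of a separating element of the required slice-concentrated form. Either drop part (ii) or mark it clearly as a conjecture; as written it is an unproved (and possibly false) strengthening of the proposition.
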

	\begin{proof}
		The   conclusions follow from the definition.
	\end{proof}
	%Whether SPSD$_{3, n}$ is a self-dual cone?

	The following example shows that the Laplacian tensor of an odd order uniform hypergraph may be not a strongly positive semi-definite tensor.   As the Laplacian tensor of a uniform hypergraph is a symmetric M-tensor, an M-tensor may be not a strongly positive semi-definite tensor too.
	
	\begin{example}
		Consider a 3-uniform hypergraph $G = (V, E)$, where $V = \{ 1, 2, 3 \}$ and $E = \{ (1, 2, 3) \}$.  Let its Laplacian tensor be $\LL = \left(l_{ijk}\right) \in S_{3,3}$.   Then we have
		$l_{111} = l_{222} = l_{333} = 1$, $l_{123} = l_{213} = l_{132} = l_{231} = l_{312} = l_{321} = -{1 \over 2}$ and $l_{ijk} = 0$ otherwise.   Fix $i = 1$.  Then $\LL$ reduces to a matrix
		$$\begin{bmatrix} 1 & 0 & 0 \\ 0 & 0 & - {1 \over 2} \\ 0 & - {1 \over 2} & 0 \end{bmatrix},$$
		which is not positive semi-definite.  By Proposition \ref{cp1}, $\LL$ is not strongly positive semi-definite.
	\end{example}
	
	\section{Strongly SOS Tensors and Strict Hankel Tensors}
	
	We now give the following definition.
	
	\begin{Def}
		Suppose that $m$ is odd.   Let $\A \in  {S_{m, n}}$.   Define $F: {\mathbb R}^n \to {\mathbb R}^n$ as $F(\vx) = \A\vx^{m-1}$ for $\vx \in {\mathbb R}^n$.   We say that $\A$ is a {\bf strongly SOS (sum-of-squares) tensor} if $F_i(\vx)$ is an SOS polynomial for $i = 1, \dots, n$.
	\end{Def}
	
	By definition, a strongly SOS tensor is a strongly positive semi-definite tensor.   Furthermore, we have the following proposition.
	
	\begin{Prop}
		%All third order strongly positive semi-definite tensors are strongly SOS tensors.
		Suppose $\A\in S_{m,n}$ is a  strongly positive semi-definite tensor. If either of the following three cases holds:
		\begin{itemize}
			\item[(i)] $n=2$,
			\item[(ii)] $m=3$,
			\item[(iii)] $m=5$ and $n=3$,
		\end{itemize}
		then $\A$ is also a strongly SOS tensor.
	\end{Prop}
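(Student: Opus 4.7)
The plan is to reduce the problem to Hilbert's classical 1888 theorem on nonnegative forms versus sums of squares. Because $\A$ is strongly positive semi-definite, by definition each coordinate function
\[
F_i(\vx) = \sum_{i_2,\dots,i_m=1}^n a_{i i_2 \cdots i_m} x_{i_2} \cdots x_{i_m}
\]
is a nonnegative polynomial on $\Re^n$. Moreover each $F_i$ is a homogeneous form of degree $m-1$ in $n$ variables, and since $m$ is odd, $m-1$ is even. So the task is: show that under the hypotheses (i)--(iii), every such nonnegative even-degree form is a sum of squares.

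The key step is to invoke Hilbert's theorem, which states that the cone of nonnegative real homogeneous forms of degree $2d$ in $n$ variables coincides with the cone of SOS forms exactly in the three classical cases: (a) $n \le 2$ (binary forms of any even degree), (b) $2d = 2$ (quadratic forms in any number of variables), and (c) $(n, 2d) = (3, 4)$ (ternary quartics). I would then match these with the three hypotheses of the proposition: case (i) $n=2$ directly matches (a); case (ii) $m = 3$ gives $\deg F_i = 2$, matching (b); case (iii) $m = 5$, $n = 3$ gives $\deg F_i = 4$ in $3$ variables, matching (c). In each case, every $F_i$ is SOS, so $\A$ is a strongly SOS tensor by definition.

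There is essentially no hard step here: the entire argument is a clean application of Hilbert's classification of the PSD-equals-SOS cones, together with the observation that strong positive semi-definiteness is precisely the nonnegativity of the coordinate forms $F_i$. The only mild subtlety is ensuring the reader that $F_i$ is genuinely homogeneous of degree $m-1$ in $\vx$, which is immediate from its expression as an $(m-1)$-fold tensor contraction. In the write-up I would cite Hilbert's 1888 paper (already referenced in the paper as \cite{Hi88}) so that the three cases of the proposition are seen to be the odd-order tensor counterparts of Hilbert's three exceptional pairs $(n, 2d)$.
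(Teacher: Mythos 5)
Your proposal is correct and is exactly the paper's argument: the authors likewise observe that each $F_i$ is a nonnegative form of even degree $m-1$ in $n$ variables and invoke Hilbert's 1888 classification of the cases where nonnegative forms are sums of squares (binary forms, quadratic forms, ternary quartics), which match (i)--(iii). You simply spell out the degree/variable bookkeeping that the paper leaves implicit.
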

	\begin{proof}
		%In this case, for all $i$, $F_i(\vx)$ is a nonnegative quadratic polynomial, which is an SOS polynomial.
		As pointed out by David Hilbert \cite{Hi88}, $F_i(\vx)$ is an SOS polynomial  in all the above three cases.
	\end{proof}

\begin{Cor}
Example 3.8 is also a strongly SOS tensor.  Thus a strongly SOS tensor may contain negative entries.
\end{Cor}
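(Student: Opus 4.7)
The plan is essentially a two-line invocation of the preceding proposition, plus an inspection of the entries of the tensor in Example~3.8. First I would note that Example~3.8 constructs a tensor $\A=(a_{i_1i_2i_3})\in S_{3,n}$ (so $m=3$), and the example itself verified that, under the stated conditions on $c_1,c_2$ (namely $c_2+1\ge 2n$ and $c_1\ge 6n+4n^2$), the slice polynomials $F_i(\vx)=\sum_{i_2,i_3}a_{ii_2i_3}x_{i_2}x_{i_3}$ are nonnegative on $\mathbb{R}^n$, so $\A$ is strongly positive semi-definite.

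Next I would apply the immediately preceding proposition in case (ii): for $m=3$, every strongly positive semi-definite tensor is automatically a strongly SOS tensor. The reason, as cited there from Hilbert \cite{Hi88}, is that each $F_i(\vx)$ is a nonnegative quadratic form in $n$ variables, and every nonnegative quadratic form is a sum of squares (diagonalize the associated positive semi-definite matrix and take square roots of its eigenvalues). Hence each $F_i$ is SOS, and $\A$ is strongly SOS by definition.

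Finally, for the second sentence of the corollary, I would simply point to the definition of $\A$ in Example~3.8: whenever the three indices $i_1,i_2,i_3$ are all distinct, $a_{i_1i_2i_3}=-1<0$. Since in any dimension $n\ge 3$ such triples exist (and the example is constructed with $n$ large enough that the conditions $c_2+1\ge 2n$ and $c_1\ge 6n+4n^2$ can be imposed), the tensor $\A$ has negative entries while being strongly SOS, establishing that the class of strongly SOS tensors is not contained in the class of entrywise nonnegative tensors.

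There is really no obstacle here; the only thing to be careful about is making sure Example~3.8 falls into one of the three cases of the proposition, and it does because its order is $m=3$. No additional computation is needed beyond referencing the existing verification of strong positive semi-definiteness in Example~3.8.
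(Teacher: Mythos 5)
Your proposal is correct and follows exactly the argument the paper intends (the corollary is stated without an explicit proof, but it is an immediate consequence of case (ii) of the preceding proposition applied to the order-$3$ tensor of Example~3.8, which was already verified to be strongly positive semi-definite and which has entries equal to $-1$ when the three indices are distinct). Your added remarks --- that a nonnegative quadratic form is SOS by diagonalization, and that $n\ge 3$ is needed for distinct-index triples to exist --- are accurate and only make the implicit reasoning explicit.
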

	
	{
		We now consider completely positive tensors.

		\begin{Thm} \label{cp:SOS}
			A completely positive tensor $\A = \left(a_{i_1\dots i_m}\right) \in S_{m,n}$ is a strongly SOS tensor.
		\end{Thm}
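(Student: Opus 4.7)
The plan is to exploit the explicit decomposition already exhibited in Theorem~\ref{cp} and observe that, because $m$ is odd, the exponent $m-1$ is even, which turns each rank-one summand into a literal square.

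First I would write out $F_i(\vx) = \A\vx^{m-1}$ component-wise using the completely positive decomposition $\A = \sum_{l=1}^r (\vu^{(l)})^m$. A direct expansion (the same one underlying the slice-matrix identity in the proof of Theorem~\ref{cp}) gives
\begin{equation*}
F_i(\vx) = \sum_{i_2,\dots,i_m=1}^n a_{i i_2 \dots i_m} x_{i_2}\cdots x_{i_m}
= \sum_{l=1}^r u_i^{(l)} \left(\sum_{j=1}^n u_j^{(l)} x_j\right)^{m-1}.
\end{equation*}

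Next I would use the parity of $m$. Since $m$ is odd (as required by the definition of strongly SOS tensors), $m-1$ is even, so $(m-1)/2$ is a nonnegative integer and
\begin{equation*}
\left(\sum_{j=1}^n u_j^{(l)} x_j\right)^{m-1} = \left[\left(\sum_{j=1}^n u_j^{(l)} x_j\right)^{(m-1)/2}\right]^2,
\end{equation*}
which is a square of a real polynomial in $\vx$. Multiplying by the nonnegative scalar $u_i^{(l)} \ge 0$ (this is where complete positivity enters, ensuring the coefficient is nonnegative so we can absorb $\sqrt{u_i^{(l)}}$ into the square) gives
\begin{equation*}
u_i^{(l)} \left(\sum_{j=1}^n u_j^{(l)} x_j\right)^{m-1} = \left[\sqrt{u_i^{(l)}}\left(\sum_{j=1}^n u_j^{(l)} x_j\right)^{(m-1)/2}\right]^2.
\end{equation*}

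Finally I would conclude by summing over $l$: $F_i(\vx)$ is a sum of $r$ squares of real polynomials, hence an SOS polynomial, for every $i \in [n]$. By definition, this means $\A$ is a strongly SOS tensor. There is no genuine obstacle here; the only thing to check is that the completely positive assumption delivers \emph{nonnegative} coefficients $u_i^{(l)}$ so that the square root is real, which is built into the definition of completely positive tensors used in Example~3.5.
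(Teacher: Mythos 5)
Your proposal is correct and follows essentially the same route as the paper: both expand $F_i(\vx)=\sum_{l=1}^r u_i^{(l)}\bigl((\vu^{(l)})^{\top}\vx\bigr)^{m-1}$ from the decomposition (\ref{complete}), use that $m-1$ is even to write each summand as a square, and absorb the nonnegative coefficient $u_i^{(l)}$ into the square. Your write-up is slightly more explicit about where nonnegativity of the $u_i^{(l)}$ is used, but the argument is the same.
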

		\begin{proof}   Suppose that $\A = \left(a_{i_1\dots i_m}\right)$ is a completely positive tensor with the form (\ref{complete}).
			%	Fix the first index $i$ for $\A = (a_{ijk})$. Then we have a square matrix
			%	$$A(i) = \sum_{l=1}^r  u_i^{(l)} \left(\vu^{(l)}\right)^2.$$
			%	By matrix analysis, this is a positive semi-definite symmetric matrix.
			%	
			%	Furthermore, since $\A$ has the rank-one decomposition (\ref{complete}),
			For any $i\in [n]$, the $i$th slice matrix of $\A$ in any mode (by the symmetry of the tensor) takes the form of
			$$A(i) = \sum_{l=1}^r  u_i^{(l)}{\left(\vu^{(l)}\right)^{(m-1)}.}$$
			Therefore, $F_i(\vx) =  \sum_{l=1}^r  u_i^{(l)}\left(\left(\left(\vu^{(l)}\right)^{\top}\vx\right)^{\frac{m-1}2} \right)^2$.
			This completes the proof.
		\end{proof}
	}
	
	Hankel tensors are important in applications \cite{PDV05}.  In particular, even order strong Hankel tensors and SOS tensors \cite{QL17}.   We now extend this to the odd order case.
	
	A Hankel tensor $\A = \left(a_{i_1\dots i_m}\right) \in T_{m, n}$ is a symmetric tensor, with a generating vector $\vh = \left(h_0, \dots, h_{(n-1)m} \right)^\top\in {\mathbb R}^{(n-1)m + 1}$, such that
	$$a_{i_1\dots i_m} = h_{i_1+\dots+ i_m},$$
	for $i_1, \dots, i_m = 0, \dots, n-1$.  Then $\vh$ is also a generating vector of a Hankel matrix $A = (a_{ij}) \in T_{2, \lceil{(n-1)m \over 2}\rceil+1}$, defined by
	$$a_{ij} = h_{i+j},$$
	for $i,j = 0,\dots, \lceil{(n-1)m \over 2}\rceil$. Here, if $m$ is odd and $n$ is even, $h_{(n-1)m+1}$ is an arbitrary real number.   We say that $A$ is the associated Hankel matrix of the Hankel tensor $\A$.   If $A$ is positive semi-definite, then we say that $\A$ is a strong Hankel tensor.   An even order strong Hankel tensor is an SOS tensor.   See Theorem 5.55 of \cite{QL17}.
	
	Now, let $\A = \left(a_{i_1\dots i_m}\right)\in S_{m, n}$ be an odd order Hankel tensor, i.e., $m$ is odd.   Denote $\A_i = \left(a_{ii_2\dots i_m}\right)\in S_{m-1, n}$ be the $(m-1)$th order Hankel tensor, with the first index $i_1$ being fixed at $i_1 = i$.  If $\A_i$ for $i = 0, \dots, m-1$, are all strong Hankel tensor, then $\A$ is called a {\bf strict Hankel tensor}. Clearly, strict Hankel tensors are strongly SOS tensors.    We have the following theorem to display some strict Hankel tensors.
	
	\begin{Thm}
		An odd order Hilbert tensor $\A \in S_{m, n}$ is a strict Hankel tensor, thus a strongly SOS tensor.
	\end{Thm}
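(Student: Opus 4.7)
The plan is to verify directly that fixing any single index in the Hilbert tensor produces an even-order Hankel tensor whose associated Hankel matrix is of Cauchy/Hilbert type and hence positive semi-definite, so the Hilbert tensor is strict Hankel, and the ``strongly SOS'' conclusion then follows from the remark just before the theorem.

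First, I would rewrite the Hilbert tensor in a form that exposes its Hankel structure. Using $c_i = i - \tfrac{m-1}{m}$ and shifting indices to run over $\{0,1,\dots,n-1\}$, the Hilbert tensor becomes
\[
a_{i_1\dots i_m} \;=\; \frac{1}{i_1+\dots+i_m + 1},\qquad i_j\in\{0,1,\dots,n-1\}.
\]
Thus it is a Hankel tensor with generating vector $h_k = \tfrac{1}{k+1}$ for $k=0,1,\dots,(n-1)m$. Now fix an index $i_1=i\in\{0,\dots,n-1\}$; the slice $\A_i$ is an $(m-1)$-th order Hankel tensor with generating vector $\tilde h_k = h_{k+i} = \tfrac{1}{k+i+1}$ for $k=0,\dots,(n-1)(m-1)$.

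Second, I would check strong Hankel-ness of $\A_i$. Since $m$ is odd, $m-1$ is even, so $(n-1)(m-1)$ is even and the associated Hankel matrix of $\A_i$ has size $N+1$ with $N=(n-1)(m-1)/2$ and entries $\tilde a_{pq} = \tfrac{1}{p+q+i+1}$. The positive semi-definiteness of this matrix is the classical Hilbert/Cauchy fact and admits a one-line integral proof: for any $v\in\Re^{N+1}$,
\[
v^\top \tilde A\, v \;=\; \sum_{p,q=0}^{N} v_p v_q \int_0^1 t^{p+q+i}\,dt \;=\; \int_0^1 t^{i}\Bigl(\sum_{p=0}^{N} v_p t^{p}\Bigr)^{\!2}\,dt \;\ge\; 0.
\]
Hence $\A_i$ is a strong Hankel tensor for every $i\in\{0,\dots,n-1\}$, so $\A$ is strict Hankel by definition, and consequently strongly SOS by the observation made immediately before the theorem statement.

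The only substantive step is the PSD-ness of the Cauchy matrix $\bigl(\tfrac{1}{p+q+i+1}\bigr)$, and that is handled cleanly by the integral representation above; the remaining work is just bookkeeping of indices (particularly the index shift between the ``$1$ to $n$'' convention used for $c_i$ and the ``$0$ to $n-1$'' convention used in the Hankel-tensor definition). I would flag that the slices $\A_i$ are not only strong Hankel but in fact yield positive definite associated matrices, so the conclusion could be strengthened to observe that the odd-order Hilbert tensor is strongly positive definite as well, consistent with the earlier Corollary.
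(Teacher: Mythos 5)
Your proof is correct and follows essentially the same route as the paper: both reduce the claim to showing that the associated Hankel matrix of each slice $\A_i$ is positive (semi-)definite, which is the classical Hilbert/Cauchy matrix fact. If anything, your integral representation $\frac{1}{p+q+i+1}=\int_0^1 t^{\,p+q+i}\,dt$ is a touch more careful than the paper's appeal to ``principal minors of a big Hilbert matrix,'' since for odd shifts $i$ the slice matrix is not literally a principal submatrix of a Hilbert matrix, whereas the integral argument covers all cases uniformly and also yields the positive definiteness you flag at the end.
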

	\begin{proof}
		Consider the associated Hankel matrix $A_i$ for $\A_i$, with $i = 0, \dots, n-1$.  Then any principal minor of $A_i$ is a principal minor of a big Hilbert matrix.   Such a principal minor must be positive as the Hilbert matrix is positive definite \cite{QL17}.   This implies that $A_i$ is positive definite.  Then $\A_i$ is a strong Hankel tensor for $i = 0, \dots, n-1$.   Hence, $\A$ is a strict Hankel tensor, thus a strongly SOS tensor.
	\end{proof}
	
	{We draw the  relations among   strongly positive semi-definite tensors, strongly sum of squares tensors, completely positive tensors, and strict Hankel tensors  for odd order tensors in Fig.~\ref{fig:S_SOS}.
	
	\begin{figure}
		\begin{center}
			\includegraphics[width=0.5\linewidth]{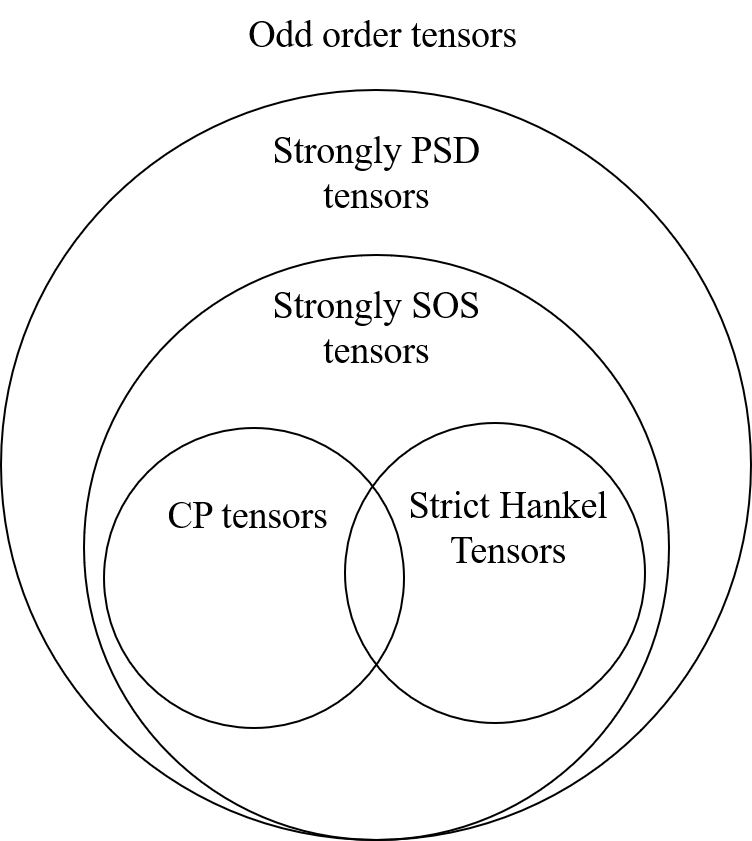}
		\end{center}
		\caption{The  relations among   strongly positive semi-definite tensors, strongly sum of squares tensors, completely positive tensors, and strict Hankel tensors  for odd order tensors.}
		\label{fig:S_SOS}
	\end{figure}
}

	\section{Final Remarks}
	
	In this paper,  we introduced strongly positive semi-definite tensors and strongly SOS tensors, and studied their properties.    We also introduced genuinely positive semi-definite tensors and barren tensors, and studied their properties.   Several outstanding questions remain.  We list them here.%we introduced lower triangular tensors and   extended the Cholesky decomposition of positive  semi-definite  Hermitian matrices to third order tensors.  There are \blue{three} problems which need to be considered further.
	
	1. We now have two examples of barren tensors.   One is an odd order symmetric copositive tensor, another is an odd order symmetric B-tensor.  Otherwise we know very little about barren tensors.  Can we have more study on barren tensors?
	
	%In the proof of Proposition \ref{eig}, we actually show that $t_{i,\dots,i}$ is an eigenvalue of $\cal T$ with a multiplicity {at least} $(m-1)^{n-i}$.    This is actually deficient.   In fact, by computation, we think that $t_{i,\dots,i}$ is an eigenvalue of $\cal T$ with a multiplicity $(m-1)^{n-1}$.
	%In this way, the total number of eigenvalues of $\cal T$ is $n(m-1)^{n-1}$, which is consistent with the tensor eigenvalue theory \cite{Qi05, QL17}.
	%At this moment, we do not where the problem occurs.
	
	2. An even order symmetric B-tensor is always positive semi-definite.   However, we now have an odd order symmetric B-tensor is a barren tensor.   Is a B-tensor always a generalized positive semi-definite tensor?
	%If a third order symmetric tensor has no negative H-eigenvalues, is it a third order positive semi-definite symmetric tensor?   If a third order symmetric tensor has no non-positive H-eigenvalues, is it a third order positive definite symmetric tensor?

	{3. Is the dual cone of SPSD$_{m, n}$   equal to SV$_{m, n}$ and whether SPSD$_{3, n}$ is a self-dual cone?}
	%Actually, it is not easy to compute eigenvalues of a general third order symmetric tensors.   Then, for
	%third order positive semi-definite symmetric tensors, the eigenvalues of their triangular factor tensors, which are easy to be calculated, may be used as a tool of spectral analysis of such symmetric tensors.
	
	4. Give an example of a strongly positive semi-definite tensor, which is not a strongly SOS tensor.
		
		%5. Completely positive tensors are nonnegative tensors.   We may also show that strict Hankel tensors are nonnegative tensors.   Is there any strongly SOS tensor which is not a nonnegative tensor?

	%7. Give a class of higher order ($m \ge 5$) tensor, which are strongly SOS tensors.}
	
	\bigskip	
	
	%\bigskip
	
	{{\bf Acknowledgment}}
	This work was partially supported by Research  Center for Intelligent Operations Research, The Hong Kong Polytechnic University (4-ZZT8),   the R\&D project of Pazhou Lab (Huangpu) (Grant no. 2023K0603),  the National Natural Science Foundation of China (Nos. 12471282 and 12131004), and the Fundamental Research Funds for the Central Universities (Grant No. YWF-22-T-204).
	
	%Lie algebra, {and Professor Yuanhua Ni and his students for discussion on kinematics control}.   In particular, we are grateful to Professor Chengming Bai and his Ph.D. student Yuanchang Lin, whose note enabled the Lie algebra identification in Section 3.
	
	%and Zhongming Chen for the discussion on standard dual quaternion optimization, to Wei Li for the discussion on hand-eye calibration, to Jiantong Cheng for the discussion on SLAM, to Guyan Ni for introducing Jiantong Cheng to me, and to Chen Ouyang and Jinjie Liu for Figures 1 and 2.   I would like to thank two anonymous referees who carefully read my manuscript and gave very helpful comments.

	{{\bf Data availability} Data will be made available on reasonable request.

		{\bf Conflict of interest} The authors declare no conflict of interest.}

	%\section*{Compliance with ethical standards}
	%\bigskip
	
	%{\bf Conflicts of Interest} The author declares no conflict of interest.

	% \vspace{100pt}

\end{document}